\documentclass[10pt]{amsart} 
\usepackage[foot]{amsaddr}
\usepackage{amsmath,amssymb,bm, color} 

 \usepackage{float}
 
\usepackage{amsmath}
\usepackage{graphicx}

\usepackage{mathrsfs}
\usepackage{bbm}
\usepackage{bm}
\usepackage{amsfonts,amssymb}
\usepackage{multirow}
\usepackage{lineno}
\usepackage{color}

\numberwithin{equation}{section}
 \newtheorem{theorem}{Theorem}[section]
 \newtheorem{lemma}[theorem]{Lemma}

\def\3bar{{|\hspace{-.02in}|\hspace{-.02in}|}}

\def\bf{{\mathbf{f}}}
\def\bu{{\mathbf{u}}}
\def\bg{{\mathbf{g}}}
\def\bv{{\mathbf{v}}}
\def\bn{{\mathbf{n}}}

\def\beta{{\boldsymbol{\eta}}}
\def\bvarphi{{\boldsymbol{\varphi}}}

\newtheorem{algorithm}{Least Squares Weak Galerkin Algorithm}[section]

\numberwithin{equation}{section}

\def\3bar{{|\hspace{-.02in}|\hspace{-.02in}|}}

\def\ad#1{\begin{aligned}#1\end{aligned}}  \def\b#1{\mathbf{#1}} 
\def\a#1{\begin{align*}#1\end{align*}} \def\an#1{\begin{align}#1\end{align}} \def\t#1{\hbox{#1}} \def\p#1{\begin{pmatrix}#1\end{pmatrix}}

\begin{document}

\title []{A Least Squares Weak Galerkin Framework for Linear Elasticity on Polytopal Meshes}

  \author {Chunmei Wang $\dagger$ }
  \address{Department of Mathematics, University of Florida, Gainesville, FL 32611, USA. }
  \email{chunmei.wang@ufl.edu} 
 
\author {Shangyou Zhang}
\address{Department of Mathematical Sciences,  University of Delaware, Newark, DE 19716, USA}   \email{szhang@udel.edu} 

\thanks{$\dagger$  Corresponding author. }

\begin{abstract}
This paper develops and analyzes a least-squares weak Galerkin (LS-WG) finite element method for linear elasticity. By employing weak differential operators, specifically the weak gradient, weak strain tensor, and weak divergence, defined on weak finite element spaces, the proposed framework facilitates the treatment of complex boundary conditions and internal interfaces while avoiding the restrictive discrete inf-sup condition. The resulting formulation is symmetric and positive definite and exhibits robust numerical performance in the nearly incompressible regime. In addition, the proposed method offers exceptional geometric flexibility, allowing implementation on general polytopal (polygonal and polyhedral) meshes. We establish the uniqueness of the numerical solution and derive optimal-order error estimates with respect to a tailored discrete energy norm. Extensive numerical experiments confirm the theoretical convergence rates and demonstrate the method's stability, efficiency, and locking-free performance for nearly incompressible materials.
\end{abstract}

\keywords{weak Galerkin, least squares,  finite element methods, weak gradient,  weak strain tensor, weak divergence, polytopal meshes,  linear elasticity.}

\subjclass[2010]{65N30, 65N15, 65N12, 65N20}
  
\maketitle

\section{Introduction}
In this work, we introduce a least squares weak Galerkin (LS-WG) finite element method for linear elasticity. Let $\Omega \subset \mathbb{R}^d$ ($d=2, 3$) denote an open, bounded, and connected domain representing an elastic body with a Lipschitz continuous boundary $\partial \Omega$. The governing equations for the displacement vector field $\mathbf{u}$, subject to an exterior force $\mathbf{f}$ and Dirichlet boundary data $\mathbf{g}$, are given by:
\begin{equation}\label{model}
    \begin{split} 
        -\nabla \cdot \sigma(\mathbf{u}) = \mathbf{f}, & \qquad\text{in } \Omega, \\
        \mathbf{u} = \mathbf{g}, & \qquad \text{on } \partial \Omega,
    \end{split}
\end{equation}
where $\sigma(\mathbf{u})$ is the symmetric Cauchy stress tensor. For isotropic, homogeneous, and linear materials, the constitutive relation is defined as:
\[
\sigma(\mathbf{u}) = 2\mu\epsilon(\mathbf{u}) + \lambda (\nabla \cdot \mathbf{u}) \mathbf{I},
\]
with the linear strain tensor $\epsilon(\mathbf{u}) = \frac{1}{2}(\nabla \mathbf{u} + \nabla \mathbf{u}^T)$. The material properties are described by the Lam\'{e} constants $\mu$ and $\lambda$. For plane strain conditions, these constants are determined by the elasticity modulus $E$ and Poisson’s ratio $\nu$ via:
\[
\lambda = \frac{E\nu}{(1+\nu)(1-2\nu)}, \quad \mu = \frac{E}{2(1+\nu)}.
\]

The model problem \eqref{model} can be equivalently reformulated to: Finding $\bu$ and $\sigma$ satisfying 
\begin{equation}\label{model2}
    \begin{split}
       -\nabla\cdot \sigma &= \bf,\qquad \text{in}\ \Omega,  \\
\sigma - 2\mu\epsilon(\bu)-\lambda (\nabla\cdot\bu)  I& =  0, \qquad \text{in}\ \Omega,  \\
  \bu&= \bg, \qquad \text{on}\ \partial \Omega.    
    \end{split}
\end{equation}

Numerical solutions for partial differential equations (PDEs) in linear elasticity rely heavily on the Finite Element Method (FEM) and its variants. Mixed FEMs are particularly prevalent, yet they face a persistent difficulty: the enforcement of strong symmetry on the stress tensor. Efforts to address this include the relaxation of symmetry constraints \cite{57}, the development of weakly symmetric elements \cite{13}, and the design of nonconforming mixed schemes \cite{4, 9, 12, 29, 62, 63, 64}. A major breakthrough facilitated the use of conforming mixed elements with reduced degrees of freedom by identifying critical structures within discrete stress spaces on simplicial grids \cite{41}.

Discontinuous Galerkin (DG) methods offer another robust framework, characterized by element-level discretization and seamless inter-element connectivity via numerical traces \cite{5, 19, 31, 59}. Notable advances in this area include hybridized mixed methods \cite{28} and the tangential-displacement normal-stress  method, the latter being particularly recognized for its locking-free properties \cite{48, 49}.

However, these traditional approaches possess inherent limitations. Standard mixed methods depend on complex $H(\text{div})$-symmetric spaces and the satisfaction of the discrete inf-sup condition. Meanwhile, DG methods typically result in a significant number of degrees of freedom and require the fine-tuning of penalty parameters to ensure stability.

In contrast, the weak Galerkin (WG) method offers a flexible alternative that relaxes regularity requiremen
ts through the use of weak derivatives and stabilization terms. By defining numerical schemes on the weak forms of the underlying PDEs, WG provides a unified and robust framework for a diverse range of computational problems \cite{wg1, wg2, wg3, wg4, wg5, wg6, wg7, wg8, wg9, wg10, wg11, wg12, wg13, wg14, wg15, wg16, wg17, wg18, wg19, wg20, wg21,   wy3655, ye, guan, guan2}.

In this work, we provide a rigorous theoretical foundation for a least squares weak Galerkin  scheme for the linear elasticity problem. The main novelty of the proposed method lies in combining a least-squares formulation of the first-order elasticity system with weak Galerkin discretization, yielding a symmetric and
positive definite (SPD) method without requiring a discrete inf-sup condition or strongly symmetric stress approximation spaces. Compared to existing methodologies in the literature, the proposed LS-WG framework offers several distinct advantages:
\begin{itemize}
  \item \textbf{Avoidance of the inf-sup condition:}
The discrete least-squares bilinear form   is SPD, and hence its matrix representation is symmetric positive
definite. Consequently, the proposed method does not require a restrictive
discrete inf-sup condition, providing greater flexibility in the choice of
finite element approximation spaces.
   \item \textbf{Elimination of strong symmetry constraints:} The use of weak differential operators (e.g., the weak gradient, weak strain tensor, and weak divergence) on discontinuous weak finite element spaces eliminates the need to construct strongly symmetric stress approximation spaces required by conforming mixed finite element methods, thereby simplifying the design of finite element spaces.
    \item \textbf{Robustness in the nearly incompressible regime:} Numerical experiments demonstrate that the proposed formulation exhibits robust performance for nearly incompressible materials. As the material approaches the incompressible limit (Poisson's ratio $\nu \to 0.5$, $\lambda \to \infty$), the method maintains optimal convergence behavior and effectively avoids the artificial stiffening commonly observed in standard displacement-based finite element methods.
    \item \textbf{Geometric flexibility:} The LS-WG scheme operates   on general polygonal and polyhedral partitions, offering a significant geometric advantage over traditional FEMs that are often strictly confined to simplicial or hexahedral grids.
\end{itemize}

We provide a complete theoretical justification for the LS-WG scheme, confirming the unique solvability of the discrete system and securing optimal-order error estimates in a designated discrete energy norm. A series of carefully designed numerical experiments is subsequently presented to validate these theoretical benchmarks, illustrating the practical efficiency, structural robustness, and overall computational viability of the methodology.

The remainder of this paper is organized as follows. In Section 2, we provide a concise review of the fundamental mathematical definitions for the weak gradient, weak strain tensor, and weak divergence, alongside their corresponding discrete counterparts. Section 3 is devoted to the development of the LS-WG formulation for the linear elasticity problem and the proof of the uniqueness of its discrete solution. Section 4 presents a rigorous derivation of the optimal-order error estimates. Finally, Section 5 presents a series of numerical experiments demonstrating the stability, accuracy, and efficiency of the proposed method, including its robust performance in the nearly incompressible regime.

Throughout this work, we adopt standard notation for Sobolev spaces and their associated norms. For any open, bounded domain $D \subset \mathbb{R}^d$ with a Lipschitz continuous boundary, $\|\cdot\|_{s,D}$ and $|\cdot|_{s,D}$ denote the norm and seminorm of the Sobolev space $H^s(D)$ for $s \ge 0$, respectively. The corresponding inner product is denoted by $(\cdot, \cdot)_{s,D}$. In the special case where $s=0$, the space $H^0(D)$ coincides with $L^2(D)$, with the norm and inner product denoted by $\|\cdot\|_D$ and $(\cdot, \cdot)_D$, respectively. For simplicity, the subscript $D$ is omitted when $D = \Omega$ or when the domain of integration is clear from context.

\section{Discrete Weak Strain Tensor and Discrete Weak Divergence}\label{Section:Hessian}
In this section, we briefly review the definitions of the weak strain tensor and weak divergence, along with their discrete counterparts, as introduced in \cite{wg10, wg18}.

Let $T$ be a polytopal element with boundary $\partial T$. A weak vector-valued function on $T$ is defined as $\bv=\{\bv_0, \bv_b\}$, where $\bv_0\in [L^2(T)]^d$ and $\bv_b\in [L^{2}(\partial T)]^d$. The first component, $\bv_0$, represents the value of $\bv$ in the interior of $T$, while the second component, $\bv_b$, represents the value of $\bv$ on the boundary of $T$. Generally, $\bv_b$ is assumed to be independent of the trace of $\bv_0$. In the special case where $\bv_b= \bv_0|_{\partial T}$, the function $\bv$ is fully determined by $\bv_0$ and is simply denoted as $\bv=\bv_0$. We denote by $W(T)$ the space of all such weak functions on $T$; i.e.,
\begin{equation*}\label{eq:WT}
 W(T)=\{\bv=\{\bv_0,\bv_b\}: \bv_0\in [L^2(T)]^d, \bv_b\in [L^{2}(\partial T)]^d\}.
\end{equation*}

Similarly, a weak tensor-valued function on $T$ is defined as $\sigma=\{\sigma_0, \sigma_b\}$, where $\sigma_0\in [L^2(T)]^{d\times d}$ and $\sigma_b\in [L^{2}(\partial T)]^{d\times d}$. By the same convention, if $\sigma_b= \sigma_0|_{\partial T}$, the function is simply denoted as $\sigma=\sigma_0$. We denote by $V(T)$ the space of all weak tensor functions on $T$; i.e.,
\begin{equation*}\label{eq:VT}
 V(T)=\{\sigma=\{\sigma_0,\sigma_b\}: \sigma_0\in [L^2(T)]^{d\times d}, \sigma_b\in [L^{2}(\partial T)]^{d\times d}\}.
\end{equation*}

The weak gradient, denoted by $\nabla_{w}$, is a linear operator from $W(T)$ to the dual space of $[H^{1}(T)]^{d\times d}$. For any $\bv\in W(T)$, the weak gradient $\nabla_w \bv$ is defined as a bounded linear functional on $[H^{1}(T)]^{d\times d}$ such that
\begin{equation*}\label{eq:weak_grad}
 (\nabla _{w}\bv, \bvarphi)_T=-(\bv_0,\nabla\cdot \bvarphi)_T+ \langle \bv_b, \bvarphi\cdot \bn \rangle_{\partial T},\quad \forall \bvarphi\in [H^{1}(T)]^{d\times d},
\end{equation*}
where $\bn$ is the unit outward normal vector to $\partial T$.
 
For any non-negative integer $r$, let $P_r(T)$ represent the space of polynomials on $T$ with a total degree of at most $r$. A discrete weak gradient on $T$, denoted by $\nabla_{w, r_1, T}$, is a linear operator from $W(T)$ to $[P_{r_1}(T)]^{d\times d}$. For any $\bv\in W(T)$, $\nabla_{w, r_1, T}\bv$ is the unique polynomial matrix in $[P_{r_1}(T)]^{d\times d}$ satisfying
\begin{equation*}\label{eq:discrete_weak_grad}
 (\nabla_{w,r_1,T} \bv, \bvarphi)_T=-(\bv_0,\nabla\cdot \bvarphi)_T+ \langle \bv_b,  \bvarphi\cdot \bn \rangle_{\partial T},\quad \forall \bvarphi \in [P_{r_1}(T)]^{d\times d}.
\end{equation*}

We define the discrete weak strain tensor as follows:
\begin{equation*}
\epsilon_{w,r_1,T}(\bu)=\frac{1}{2}(\nabla_{w,r_1,T}\bu+\nabla_{w,r_1,T}\bu^T).
\end{equation*}
 
For any $\bv\in W(T)$, the discrete weak strain tensor, denoted by $\epsilon_{w, r_1, T}(\bv)$, is the unique polynomial matrix in $[P_{r_1}(T)]^{d\times d}$ satisfying
\begin{equation}\label{2.5}
 (\epsilon_{w,r_1,T} (\bv), \bvarphi)_T=-(\bv_0,\nabla \cdot \frac{1}{2}(\bvarphi+\bvarphi^T) )_T+ \langle \bv_b,  \frac{1}{2}(\bvarphi+\bvarphi^T)\cdot \bn \rangle_{\partial T}, 
\end{equation}
for all $\bvarphi \in [P_{r_1}(T)]^{d\times d}$.
  
For a sufficiently smooth $\bv_0\in [H^1(T)]^d$, applying standard integration by parts to the first term on the right-hand side of (\ref{2.5}) gives
\begin{equation}\label{2.5new}
 (\epsilon_{w,r_1,T} (\bv), \bvarphi)_T=(\epsilon( \bv_0), \bvarphi)_T+ \langle \bv_b-\bv_0, \frac{1}{2}( \bvarphi+\bvarphi^T)\cdot \bn \rangle_{\partial T},  
\end{equation} 
for all $\bvarphi \in [P_{r_1}(T)]^{d\times d}$. 

The weak divergence of $\bv\in W(T)$, denoted by $\nabla_w\cdot \bv$, is a bounded linear functional in the Sobolev space $H^1(T)$, and its action on any $\phi\in H^1(T)$ is given by
\begin{equation*}\label{eq:weak_div_v}
    (\nabla_w\cdot \bv, \phi)_T=-(\bv_0, \nabla\phi)_T+\langle \bv_b\cdot\bn, \phi\rangle_{\partial T}.
\end{equation*} 

The discrete weak divergence of $\bv\in W(T)$, denoted by $\nabla_{w, r_2, T}\cdot \bv$, is the unique polynomial in $P_{r_2}(T)$ satisfying 
\begin{equation}\label{disdiv}
    (\nabla_{w, r_2, T}\cdot \bv, \phi)_T=-(\bv_0, \nabla\phi)_T+\langle \bv_b\cdot\bn, \phi\rangle_{\partial T},
\end{equation}
for any $\phi\in P_{r_2}(T)$. 

For a smooth $\bv_0\in [H^1(T)]^d$, applying integration by parts to the first term on the right-hand side of (\ref{disdiv}) yields
\begin{equation}\label{disdivnew}
    (\nabla_{w, r_2, T}\cdot \bv, \phi)_T= (\nabla\cdot \bv_0,  \phi)_T+\langle (\bv_b-\bv_0)\cdot\bn, \phi\rangle_{\partial T},
\end{equation}
for any $\phi\in P_{r_2}(T)$. 

Similarly, the weak divergence of a tensor $\sigma\in V(T)$, denoted by $\nabla_w\cdot \sigma$, is a bounded linear functional in the Sobolev space $[H^1(T)]^d$, and its action on any $\bv\in [H^1(T)]^d$ is given by
\begin{equation*}\label{eq:weak_div_sigma}
    (\nabla_w\cdot \sigma, \bv)_T=-(\sigma_0, \nabla\bv)_T+\langle \sigma_b\cdot\bn, \bv\rangle_{\partial T}.
\end{equation*} 

The discrete weak divergence of $\sigma\in V(T)$, denoted by $\nabla_{w, r_3, T}\cdot \sigma$, is the unique polynomial in $[P_{r_3}(T)]^d$ satisfying 
\begin{equation}\label{disdiv2}
    (\nabla_{w, r_3, T}\cdot \sigma, \bv)_T=-(\sigma_0, \nabla\bv)_T+\langle \sigma_b\cdot\bn, \bv\rangle_{\partial T},
\end{equation}
for any $\bv\in [P_{r_3}(T)]^d$. 

For a smooth $\sigma_0\in [H^1(T)]^{d\times d}$, applying integration by parts to the first term on the right-hand side of (\ref{disdiv2}) yields
\begin{equation}\label{disdivnew2}
    (\nabla_{w, r_3, T}\cdot \sigma, \bv)_T= (\nabla\cdot \sigma_0,  \bv)_T+\langle (\sigma_b-\sigma_0)\cdot\bn, \bv\rangle_{\partial T},
\end{equation}
for any $\bv\in [P_{r_3}(T)]^d$.

\section{Least Squares Weak Galerkin Algorithms}\label{Section:WGFEM}

Let $\mathcal{T}_h$ be a finite element partition of the domain $\Omega\subset \mathbb{R}^d$ into polytopal elements, where $\mathcal{T}_h$ is assumed to be shape-regular as defined in \cite{wy3655}. Denote by $\mathcal{E}_h$ the set of all edges/faces in $\mathcal{T}_h$, and let $\mathcal{E}_h^0=\mathcal{E}_h \setminus \partial\Omega$ be the set of interior edges/faces. The diameter of an element $T\in \mathcal{T}_h$ is denoted by $h_T$, and the mesh size of the partition is given by $h=\max_{T\in \mathcal{T}_h}h_T$.

For each element $T\in\mathcal{T}_h$, we define the local weak finite element space as: $k\ge 1$,
\begin{equation}\label{Wk}
W(k, T)=\{\{\bv_0,\bv_b\}: \bv_0\in [P_k(T)]^d,\bv_b\in [P_{k}(e)]^d\}.   
\end{equation}

By assembling $W(k, T)$ over all the elements $T\in \mathcal{T}_h$ and enforcing continuity on the interior interfaces $\mathcal{E}_h^0$, we define the global weak finite element space:
\begin{equation}\label{Wh}
W_h= \{\{\bv_0,\bv_b\}:\ \{\bv_0,\bv_b\}|_T\in W(k, T), \forall T\in {\mathcal T}_h, \bv_b|_{T_1}=\bv_b|_{T_2}  \ \text{on} \ e=T_1\cap T_2, \forall e\in \mathcal{E}_h^0  \}.
\end{equation}

Additionally, we denote by $W_h^0$ the subspace of $W_h$ with vanishing boundary values on $\partial\Omega$:
\begin{equation}\label{Wh0}
W_h^0=\{\{\bv_0,\bv_b\}\in W_h: \bv_b=0 \ \text{on}\ \partial\Omega\}.
\end{equation}

For each element $T\in\mathcal{T}_h$, we define a local weak finite element space  as: $k\ge 1$,
\begin{equation}\label{Vk_local}
V(k, T)=\{\{\sigma_0,\sigma_b\}: \sigma_0\in [P_{k}(T)]^{d\times d},\sigma_b\in [P_{k}(e)]^{d\times d}\}.   
\end{equation}

By assembling $V(k, T)$ over all the elements $T\in \mathcal{T}_h$ and enforcing continuity on the interior interfaces $\mathcal{E}_h^0$, we define the global weak finite element space:
\begin{equation}\label{Vh}
V_h=\big\{\{\sigma_0,\sigma_b\}:\ \{\sigma_0,\sigma_b\}|_T\in V(k, T), \forall T\in \mathcal{T}_h, \sigma_b|_{T_1}=\sigma_b|_{T_2}  \ \text{on} \ e=T_1\cap T_2, \forall e\in \mathcal{E}_h^0   \big\}.
\end{equation}
 
We take $r_1=k$, $r_2=k$ and $r_3=k$. For simplicity, the discrete weak strain tensor $\epsilon_{w, k, T}\bv$ and the discrete weak divergence $\nabla_{w, k, T} \cdot\bv$ are denoted by $\epsilon_{w}\bv$ and $\nabla_{w} \cdot\bv$, respectively. These quantities are computed locally on each element $T$ using definitions \eqref{2.5} and \eqref{disdiv}: 
\an{ \label{w-sg} 
(\epsilon_{w} \bv)|_T & = \epsilon_{w, k, T}(\bv |_T), \qquad \forall T\in \mathcal{T}_h, \\
   \label{w-d-s} 
(\nabla_{w}\cdot \bv)|_T& = \nabla_{w, k, T}\cdot(\bv |_T), \qquad \forall T\in \mathcal{T}_h. }

Similarly, the discrete weak divergence $\nabla_{w, k, T} \cdot\sigma$ is denoted simply by $\nabla_{w} \cdot\sigma$. This quantity is computed locally on each element $T$ using definition \eqref{disdiv2}: 
\an{\label{w-d-u} 
(\nabla_{w}\cdot \sigma)|_T= \nabla_{w, k, T}\cdot(\sigma |_T), \qquad \forall T\in \mathcal{T}_h. }

To enforce the connection between the interior and boundary components of the weak functions, we introduce the following stabilizers:
\[
s_1(\bu, \bv) = \sum_{T \in \mathcal{T}_h} h_T^{-1} \langle \bu_0 - \bu_b, \bv_0 - \bv_b \rangle_{\partial T}, \qquad
\forall \bu, \bv\in W_h,
\]
\[
s_2(\sigma, \delta) = \sum_{T \in \mathcal{T}_h} h_T^{-1} \langle \sigma_0 -\sigma_b, \delta_0 - \delta_b \rangle_{\partial T}, \qquad\forall \sigma, \delta\in V_h.
\]

For $\bu, \bv\in W_h$ and $\sigma,  \delta\in V_h$, we introduce the bilinear form:   
\begin{equation*} 
\begin{split}
&a ((\bu, \bv), (\sigma, \delta))=\sum_{T\in \mathcal{T}_h}(\nabla_w\cdot\sigma, \nabla_w \cdot\delta)_T\\
&+ (\sigma_0-2\mu \epsilon_w(\bu )-\lambda (\nabla_w\cdot\bu )I, \delta_0-2\mu \epsilon_w(\bv)-\lambda (\nabla_w\cdot\bv)I ) _T.
 \end{split}
\end{equation*}
The least squares WG numerical scheme for the elasticity problem \eqref{model2} is as follows:

\begin{algorithm}\label{PDWG1}
Find $\bu_h=\{\bu_0, \bu_b\} \in W_h$ and $\sigma_h=\{\sigma_0, \sigma_b\} \in V_h$ such that $\bu_b=Q_b\bg$ on $\partial\Omega$ and  
\begin{equation}\label{WG}
\begin{split}
    &a ((\bu_h, \bv), (\sigma_h, \delta))+s_1(\bu_h, \bv)+ s_2(\sigma_h, \delta) = \sum_{T\in \mathcal{T}_h}(-\bf, \nabla_w \cdot\delta)_T, 
\end{split}
\end{equation}
for all $\bv=\{\bv_0, \bv_b\}\in W_h^0$ and $\delta\in V_h$. Here $Q_b$ denotes the $L^2$ projection operator onto the space $P_k(e)$.
\end{algorithm}
 
\begin{theorem}\label{theorem1}  
The least-squares weak Galerkin scheme \eqref{WG} has a unique solution.
\end{theorem}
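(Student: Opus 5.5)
Since \eqref{WG} is a square linear system in finite dimensions (the unknowns $\bu_h$ with $\bu_b=Q_b\bg$ fixed on $\partial\Omega$, tested against $W_h^0\times V_h$), existence and uniqueness are equivalent. It therefore suffices to show that the homogeneous problem has only the trivial solution. Take $\bf=0$ and $\bg=0$, so that $\bu_h\in W_h^0$. Choose the test functions $\bv=\bu_h$ and $\delta=\sigma_h$. Because the bilinear form is a sum of squares, this gives
\[
\sum_{T}\|\nabla_w\cdot\sigma_h\|_T^2+\|\sigma_0-2\mu\epsilon_w(\bu_h)-\lambda(\nabla_w\cdot\bu_h)I\|_T^2+s_1(\bu_h,\bu_h)+s_2(\sigma_h,\sigma_h)=0 .
\]
Hence every term vanishes on each $T$: we get $\nabla_w\cdot\sigma_h=0$, $\sigma_0=2\mu\epsilon_w(\bu_h)+\lambda(\nabla_w\cdot\bu_h)I$, $\bu_0=\bu_b$ on $\partial T$, and $\sigma_0=\sigma_b$ on $\partial T$.

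\textbf{Step 1: continuity.} From the stabilizers and the single-valuedness of $\bu_b,\sigma_b$ on interior faces, $\bu_0$ is continuous across faces, hence $\bu_0\in[H^1_0(\Omega)]^d$ (using $\bu_b=0$ on $\partial\Omega$). Likewise $\sigma_0\in [H^1(\Omega)]^{d\times d}$.

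\textbf{Step 2: weak operators become classical.} By \eqref{2.5new} and \eqref{disdivnew} with $\bu_b-\bu_0=0$, and since $\epsilon(\bu_0)\in[P_{k-1}(T)]^{d\times d}\subset[P_k]^{d\times d}$ and $\nabla\cdot\bu_0\in P_{k-1}$, we obtain $\epsilon_w(\bu_h)=\epsilon(\bu_0)$ and $\nabla_w\cdot\bu_h=\nabla\cdot\bu_0$ on each $T$. Similarly \eqref{disdivnew2} with $\sigma_b=\sigma_0$ gives $\nabla_w\cdot\sigma_h=\nabla\cdot\sigma_0$. Therefore $\sigma_0=2\mu\epsilon(\bu_0)+\lambda(\nabla\cdot\bu_0)I=\sigma(\bu_0)$ and $\nabla\cdot\sigma_0=0$ elementwise. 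Since $\sigma_0\in H^1$, this holds in $\Omega$, so $-\nabla\cdot\sigma(\bu_0)=0$ in the distributional sense.

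\textbf{Step 3: energy argument.} Test $-\nabla\cdot\sigma(\bu_0)=0$ with $\bu_0\in[H^1_0]^d$ and integrate by parts (valid since $\sigma_0\in H^1$ globally) to get
\[
2\mu\|\epsilon(\bu_0)\|^2+\lambda\|\nabla\cdot\bu_0\|^2=0 .
\]
With $\mu>0$ and $\lambda\ge0$, this forces $\epsilon(\bu_0)=0$. Korn's inequality on $H^1_0$ then gives $\bu_0=0$. Consequently $\bu_b=\bu_0|_{\partial T}=0$, $\sigma_0=\sigma(\bu_0)=0$, and $\sigma_b=\sigma_0|_{\partial T}=0$.

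The main care point is Step 2: justifying that the weak operators reduce exactly to the classical ones. This needs the polynomial degrees $r_1=r_2=r_3=k$ to contain the classical derivatives of degree-$k$ polynomials, together with the vanishing of the jump terms. The other delicate point is that $\sigma_0$ is globally $H^1$, which legitimizes the global integration by parts with no interface terms; this follows from $s_2=0$ and the continuity of $\sigma_b$ across interior faces.
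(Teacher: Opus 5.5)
Your proposal is correct and follows the paper's proof step for step. You test with $\bv=\bu_h$, $\delta=\sigma_h$ to get a sum of squares, use the stabilizers and single-valued $\bu_b,\sigma_b$ to obtain global continuity, reduce the weak operators to classical ones, and conclude $\bu_0=0$ and then $\sigma_0=0$. The only difference is that you spell out details the paper leaves implicit: the square-system justification, the degree inclusion $\epsilon(\bu_0)\in[P_{k-1}(T)]^{d\times d}\subset[P_k(T)]^{d\times d}$, and the final step via the identity $2\mu\|\epsilon(\bu_0)\|^2+\lambda\|\nabla\cdot\bu_0\|^2=0$ plus Korn's inequality, which the paper simply asserts.
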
 
\begin{proof}
It suffices to show that the solution to \eqref{WG} is trivial if $\bf=0$ and $\bg=0$. Assume that $\bf=0$ and $\bg=0$, and let $\bv=\bu_h$ and $\delta=\sigma_h$ in \eqref{WG}. This yields: 
\begin{equation*}\label{WG_proof}
\begin{split}
    & \sum_{T\in \mathcal{T}_h}(\nabla_w\cdot\sigma_h, \nabla_w \cdot\sigma_h)_T+ (\sigma_0-2\mu \epsilon_w(\bu_h)-\lambda (\nabla_w\cdot\bu_h)I,\\
    &\quad \sigma_0-2\mu \epsilon_w(\bu_h)-\lambda (\nabla_w\cdot\bu_h)I ) _T+s_1(\bu_h, \bu_h)+ s_2(\sigma_h, \sigma_h) = 0.
\end{split}
\end{equation*}
Because every term on the left-hand side is non-negative, this implies $\nabla_w\cdot\sigma_h=0$ and $\sigma_0-2\mu \epsilon_w(\bu_h)-\lambda (\nabla_w\cdot\bu_h)I=0$ on each element $T$,  $\sigma_0=\sigma_b$ on each $\partial T$.

Since $\bu_b$ and  $\sigma_b$ are single-valued on every interior interface and  $\bu_0=\bu_b$ and $\sigma_0=\sigma_b$ on each $\partial T$, the traces of $\bu_0$ and $\sigma_0$ coincide across every interior interface. Hence $\bu_0$ and $\sigma_0$ are globally continuous throughout the entire domain $\Omega$.

Using \eqref{disdivnew} and the fact that $\bu_0=\bu_b$ on $\partial T$, we obtain:
\begin{equation*} 
(\nabla_w \cdot \bu_h,\varphi)_T=(\nabla \cdot \bu_0,\varphi)_T,
\end{equation*}
for all $\varphi \in P_{k}(T)$. This establishes that $ \nabla_w \cdot \bu_h=\nabla \cdot \bu_0$ on each element $T$. 
  
Similarly, applying \eqref{2.5new} with $\bu_0=\bu_b$ on $\partial T$ gives:
\begin{equation*} 
(\epsilon_w \bu_h,\varphi)_T=(\epsilon \bu_0,\varphi)_T,
\end{equation*}
for all $\varphi \in P_{k}(T)$, which leads to $ \epsilon_w \bu_h=\epsilon \bu_0$ on each element $T$. 

Applying the analogous reasoning with \eqref{disdivnew2} and $\sigma_0=\sigma_b$ on $\partial T$ yields:
\begin{equation*} 
(\nabla_w \cdot \sigma_h,\bv)_T=(\nabla \cdot \sigma_0,\bv)_T,
\end{equation*}
for all $\bv \in [P_{k}(T)]^d$, establishing that $ \nabla_w \cdot \sigma_h=\nabla \cdot\sigma_0$ on each element $T$. 

Substituting these classical operators into our initial deductions, the conditions $\nabla_w\cdot\sigma_h=0$ and $\sigma_h-2\mu \epsilon_w(\bu_h)-\lambda (\nabla_w\cdot\bu_h)I=0$ imply that $\nabla \cdot\sigma_0=0$ and $\sigma_0-2\mu \epsilon (\bu_0)-\lambda (\nabla \cdot\bu_0)I=0$ on each element $T$. 
 
Because $\bu_0$ and $\sigma_0$ are continuous across the whole domain $\Omega$, these relations hold globally: $\nabla \cdot\sigma_0=0$ and $\sigma_0-2\mu \epsilon (\bu_0)-\lambda (\nabla \cdot\bu_0)I=0$ in $\Omega$. Furthermore, $\bu_0=\bu_b$ on $\partial T$ and the homogeneous boundary condition $\bu_b=0$ on $\partial \Omega$ together dictate that $\bu_0=0$ on $\partial \Omega$. 

From the global system $\nabla \cdot\sigma_0=0$ and $\sigma_0-2\mu \epsilon (\bu_0)-\lambda (\nabla \cdot\bu_0)I=0$ in $\Omega$, combined with $\bu_0=0$ on $\partial \Omega$, we conclude that $\bu_0 \equiv 0$ in $\Omega$. Since $\bu_0=\bu_b$ on each $\partial T$, it follows that $\bu_b\equiv 0$, which means $\bu_h \equiv 0$ in the domain $\Omega$. 

Substituting $\bu_0 \equiv 0$ into the relation $\sigma_0-2\mu \epsilon (\bu_0)-\lambda (\nabla \cdot\bu_0)I=0$ gives $\sigma_0 \equiv 0$ in $\Omega$. Again, since $\sigma_0=\sigma_b$ on each $\partial T$, we get $\sigma_b\equiv 0$, resulting in $\sigma_h \equiv 0$ in $\Omega$. 

This completes the proof of the theorem. 
\end{proof} 

We now define a semi-norm on $W_h\times V_h$ as follows:
\an{\label{three-b} \ad{
\3bar (\bu, \sigma)\3bar^2 &=  \sum_{T\in \mathcal{T}_h}(\nabla_w\cdot\sigma, \nabla_w \cdot\sigma)_T + (\sigma_0-2\mu \epsilon_w(\bu)-\lambda (\nabla_w\cdot\bu)I,\\
&\quad \  \ \ \sigma_0-2\mu \epsilon_w(\bu)-\lambda (\nabla_w\cdot\bu)I ) _T+s_1(\bu,\bu)+s_2(\sigma, \sigma) .        
 } }
 
Similar to the proof of Theorem \ref{theorem1}, we can prove that $\3bar \cdot\3bar$ defines a norm on $W_h^0\times V_h$.

\section{Error Estimates}
We begin by recalling the fundamental trace inequalities necessary for our analysis. Let $\mathcal{T}_h$ be a shape-regular finite element partition of the domain $\Omega$. For any element $T \in \mathcal{T}_h$ and function $\phi \in H^1(T)$, the following trace inequality holds \cite{wy3655}:
\begin{equation}\label{tracein}
\|\phi\|^2_{\partial T} \leq C \left( h_T^{-1} \|\phi\|_T^2 + h_T \|\nabla \phi\|_T^2 \right).
\end{equation}
When $\phi$ is a polynomial, this bound simplifies to the  trace inequality \cite{wy3655}:
\begin{equation}\label{trace}
\|\phi\|^2_{\partial T} \leq C h_T^{-1} \|\phi\|_T^2.
\end{equation}

For each element $T \in \mathcal{T}_h$, let $Q_0^k$ denote the $L^2$ projection 
onto the polynomial spaces $P_k(T)$ with $k \geq 1$. 
For each edge or face $e \subset \partial T$, let $Q_b^k$ 
  represent the $L^2$ projections onto $P_k(e)$. 

For any function $\bu \in [H^2(\Omega)]^d$, we define $Q_h^k \bu$ as the $L^2$ projection onto the weak finite element space $W_h$ such that on each element $T$:
\begin{equation*}
Q_h^k \bu = \{Q_0^k \bu, Q_b^k \bu\}.
\end{equation*}

For any function $\sigma \in [H^1(\Omega)]^{d\times d}$, we define $Q_h^{k} \sigma$ as the $L^2$ projection onto the weak finite element space $V_h$ such that on each element $T$:
\begin{equation*}
Q_h^{k} \sigma = \{Q_0^{k} \sigma, Q_b^{k} \sigma\}.
\end{equation*}

Furthermore, let $\mathcal{Q}^{k}_h$ denote the $L^2$ projections onto the spaces $P_{k}(T)$.

\begin{lemma} \cite{wy3655}  
Let $\mathcal{T}_h$ be a finite element partition of $\Omega$ satisfying the shape regularity assumptions given in \cite{wy3655}. Then, for any $0\leq s\leq 1$ and $1\leq m\leq k$, one has
\begin{align}
\label{3.1} \sum_{T\in \mathcal{T}_h}h_T^{2s}\|\bu-Q_0^k\bu\|^2_{s,T} &\leq C h^{2(m+1)}\|\bu\|_{m+1}^2, \\
\label{3.2} \sum_{T\in \mathcal{T}_h}h_T^{2s}\|\sigma-Q_0^{k}\sigma\|^2_{s,T} 
  &\leq C h^{2(m+1)}\|\sigma\|_{m+1}^2.
\end{align}
\end{lemma}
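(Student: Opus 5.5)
This lemma is the standard polynomial approximation estimate for $L^2$ projections on shape-regular polytopal meshes. The plan is to prove it element by element and then sum, treating \eqref{3.1} and \eqref{3.2} identically. The tensor case follows from the vector case applied to each component, since $Q_0^k$ acts componentwise.

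\textbf{Step 1 (local estimate).} Fix $T\in\mathcal{T}_h$ and a scalar component $w$ of $\bu$, with $w\in H^{m+1}(T)$ and $m\le k$. Since $Q_0^k$ reproduces $P_k(T)\supset P_m(T)$, we have $w-Q_0^k w=(I-Q_0^k)(w-p)$ for every $p\in P_m(T)$.

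\begin{itemize}
\item \emph{$L^2$ bound.} Stability of the projection gives $\|w-Q_0^k w\|_T\le \|w-p\|_T$.
\item \emph{$H^1$ bound.} We have $|w-Q_0^kw|_{1,T}\le |w-p|_{1,T}+|Q_0^k(w-p)|_{1,T}$. An inverse inequality on $T$ gives $|Q_0^k(w-p)|_{1,T}\le Ch_T^{-1}\|w-p\|_T$.
\item \emph{Choice of $p$.} Take $p$ to be the averaged Taylor polynomial, or the Bramble--Hilbert polynomial, of degree $m$. This gives $\|w-p\|_T+h_T|w-p|_{1,T}\le Ch_T^{m+1}|w|_{m+1,T}$.
\end{itemize}

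Combining these, $h_T^{s}\|w-Q_0^kw\|_{s,T}\le Ch_T^{m+1}\|w\|_{m+1,T}$ for $s=0,1$. For intermediate $0<s<1$, I would either interpolate between the two bounds or simply note that the sum of the $s=0$ and $s=1$ quantities dominates.

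\textbf{Step 2 (summation).} Square the local bound, sum over $T$, and use $h_T\le h$ together with $\sum_T\|w\|_{m+1,T}^2\le\|w\|_{m+1}^2$. This yields \eqref{3.1}. Repeating the argument with $\sigma$ in place of $\bu$ gives \eqref{3.2}.

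\textbf{Main obstacle.} The delicate point is that $T$ is a general polytope, so there is no affine reference element. Both the Bramble--Hilbert estimate and the inverse inequality need constants depending only on the shape-regularity parameters of \cite{wy3655}, not on the specific geometry of $T$.

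I would handle this through the shape-regularity assumptions of \cite{wy3655}. Each $T$ is star-shaped with respect to a ball of radius $\rho_T\ge c\,h_T$, which gives a uniform Bramble--Hilbert constant via the averaged Taylor polynomial (Dupont--Scott). Each $T$ also contains a ball of comparable size and is contained in one, so polynomial norms on $T$ and on the inscribed ball are uniformly equivalent. This gives the inverse inequality with a uniform constant, by comparing against the inscribed ball and scaling. Once these two uniform bounds are in place, the rest of the argument is routine.
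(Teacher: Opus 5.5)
Your Step~1 is the standard local argument, and it works on meshes whose elements are star-shaped with respect to a ball of radius $\ge c\,h_T$; there it even gives a sharper elementwise bound in terms of $\|\bu\|_{m+1,T}$. The gap is in how you resolve your ``main obstacle''. Star-shapedness is neither one of the shape-regularity assumptions of \cite{wy3655} nor a consequence of them. The paper itself gives no proof and simply cites \cite{wy3655}. As best I recall, the assumptions there are:
\begin{itemize}
\item a lower bound $|T|\ge\varrho_v h_T^d$;
\item edge/face sizes comparable to $h_T$;
\item a pyramid condition on each face (a pyramid inside $T$ with base $e$ and height $\ge\sigma^* h_T$);
\item a shape-regular circumscribed simplex $S(T)\supset T$ with $h_{S(T)}\le\gamma_* h_T$, each $S(T)$ meeting only a bounded number of the others.
\end{itemize}
A U-shaped polygon built from a few unit squares satisfies all of these with fixed constants. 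Yet it is not star-shaped with respect to any point, let alone a ball. So the uniform Dupont--Scott constant you invoke is not controlled by the stated hypotheses, and Step~1 does not yet prove the lemma as stated.

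The missing idea is to do the approximation on the circumscribed simplex, which is what the bounded-overlap assumption is for.
\begin{itemize}
\item Take a bounded extension $E:[H^{m+1}(\Omega)]^d\to[H^{m+1}(\mathbb{R}^d)]^d$.
\item Pick $p_T\in[P_m(S(T))]^d$ with $\|E\bu-p_T\|_{S(T)}+h_T|E\bu-p_T|_{1,S(T)}\le Ch_T^{m+1}|E\bu|_{m+1,S(T)}$; this is standard on a shape-regular simplex.
\item Restrict to $T\subset S(T)$ and run your $L^2$-stability and inverse-inequality steps unchanged.
\end{itemize}
The inverse inequality does not need an inscribed ball either. It can be obtained from $T\subset S(T)$ and $|T|\ge\varrho_v h_T^d$ via a Remez-type equivalence of polynomial norms on $T$ and $S(T)$. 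Summing with bounded overlap gives $\sum_T|E\bu|^2_{m+1,S(T)}\le C\|E\bu\|^2_{m+1,\mathbb{R}^d}\le C\|\bu\|^2_{m+1}$. This is why the lemma has the global norm $\|\bu\|_{m+1}$ on the right rather than $\sum_T\|\bu\|^2_{m+1,T}$.

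A smaller point concerns $0<s<1$. Your remark that the $s=0$ and $s=1$ quantities dominate requires $h_T^{2s}\|v\|^2_{s,T}\le C(\|v\|^2_T+h_T^2\|v\|^2_{1,T})$ with $C$ independent of $T$. That is immediate only if $H^s(T)$ is normed by interpolation. The paper only uses $s=0,1$, so this does not affect the application.
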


\begin{lemma} \cite{wgelas,wg10}
The projection operators $Q_h^{k}$ and  $\mathcal{Q}^{k}_h$ 
           satisfy the following commutative properties:
\begin{align}
\nabla_w\cdot(Q_h^k \bu) &= \mathcal{Q}^{k}_h(\nabla\cdot \bu), \label{EQ:CommutativeP} \\
\epsilon_w(Q_h^k \bu) &= \mathcal{Q}^{k}_h(\epsilon \bu), \label{EQ:CommutativeP2} \\
\nabla_w\cdot(Q_h^{k} \sigma) &= \mathcal{Q}^{k}_h(\nabla\cdot \sigma), \label{EQ:CommutativeP3}
\end{align}
for all $\bu\in [H^{k}(T)]^d$ and $\sigma\in [H^{k}(T)]^{d\times d}$.
\end{lemma}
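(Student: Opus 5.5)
The three identities are proved the same way. Fix an element $T$. For each identity I will show that both sides, which are polynomials of degree $k$ on $T$, have the same $L^2(T)$ inner product with every test polynomial of degree $k$. In every case I start from the defining relation of the discrete weak operator, i.e.\ \eqref{disdiv}, \eqref{2.5} or \eqref{disdiv2}, and substitute the components of $Q_h^k\bu=\{Q_0^k\bu,Q_b^k\bu\}$ or of $Q_h^k\sigma$.

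\textbf{Weak divergence \eqref{EQ:CommutativeP}.} Take $\phi\in P_k(T)$. Then
\[
(\nabla_w\cdot Q_h^k\bu,\phi)_T=-(Q_0^k\bu,\nabla\phi)_T+\langle Q_b^k\bu\cdot\bn,\phi\rangle_{\partial T}.
\]
Since $\nabla\phi\in[P_{k-1}(T)]^d\subset[P_k(T)]^d$, the projection can be dropped in the first term, which becomes $-(\bu,\nabla\phi)_T$. On each face $e$ the normal $\bn$ is constant and $\phi|_e\in P_k(e)$, so the second term is $\langle \bu\cdot\bn,\phi\rangle_{\partial T}$. Integrating by parts gives $(\nabla\cdot\bu,\phi)_T=(\mathcal{Q}_h^k\nabla\cdot\bu,\phi)_T$, which proves the identity.

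\textbf{Weak strain \eqref{EQ:CommutativeP2}.} The argument is the same with \eqref{2.5}, using $\bvarphi\in[P_k(T)]^{d\times d}$. The matrix $\tfrac12(\bvarphi+\bvarphi^T)$ is still a polynomial matrix of degree $k$, and its divergence has degree $k-1$. After removing both projections, integration by parts gives $(\nabla\bu,\tfrac12(\bvarphi+\bvarphi^T))_T$. This equals $(\epsilon(\bu),\bvarphi)_T$ because of the symmetry identity $(A,\tfrac12(\bvarphi+\bvarphi^T))=(\tfrac12(A+A^T),\bvarphi)$.

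\textbf{Tensor divergence \eqref{EQ:CommutativeP3}.} The argument is the same with \eqref{disdiv2} and $\bv\in[P_k(T)]^d$. Here $\nabla\bv$ is a polynomial matrix of degree $k-1$, and $(\sigma_b\cdot\bn)\cdot\bv$ on a flat face involves only $Q_b^k\sigma$ tested against polynomials of degree $k$.

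\textbf{Main obstacle.} The only delicate point is justifying that the face projection $Q_b^k$ can be removed. This needs the faces to be flat, so that $\bn$ is piecewise constant on $\partial T$, which holds for polytopal elements. It also needs the traces of $\bu$, $\sigma$ and their derivatives to make sense, which the regularity assumed in the statement provides for integration by parts. Once this is settled, each identity reduces to a one-line integration by parts.
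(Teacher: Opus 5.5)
Your argument is correct and is the standard one from the references the paper cites in place of a proof. You test against $P_k$, drop $Q_0^k$ because derivatives of the test functions lie in $P_{k-1}$, drop $Q_b^k$ because $\bn$ is constant on each flat face, and integrate by parts. One small correction: only traces of $\bu$ and $\sigma$ themselves are used, never traces of their derivatives, so $H^1(T)$ regularity suffices, which matters when $k=1$.
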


\begin{theorem}
Let $\bu$ and $\sigma$ be the exact solutions to the elasticity problem \eqref{model2}, and let $\bu_h \in W_h$ and $\sigma_h\in V_h$ be the numerical solutions to the least-squares Weak Galerkin scheme \eqref{WG}. We define the error functions  
$$e_{\bu_h}=\{e_{\bu_0}, e_{\bu_b}\}=Q_h^k\bu-\bu_h, \qquad e_{\sigma_h}=\{e_{\sigma_0}, e_{\sigma_b}\}=Q_h^{k}\sigma-\sigma_h.$$ 
There exists a constant $C$ such that 
\begin{equation}\label{est1}
    \3bar (e_{\bu_h}, e_{\sigma_h})\3bar \leq C(h^{k} \|\bu\|_{k+1}+h^{k}\|\sigma\|_{k+1}).
\end{equation}
\end{theorem}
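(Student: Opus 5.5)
The estimate follows from an error equation and the Galerkin-type identity for the least-squares form. First I plug the projected exact solution $(Q_h^k\bu, Q_h^k\sigma)$ into the left-hand side of \eqref{WG}, tested against arbitrary $\bv\in W_h^0$, $\delta\in V_h$. By the commutative properties \eqref{EQ:CommutativeP}--\eqref{EQ:CommutativeP3}, $\nabla_w\cdot(Q_h^k\sigma)=\mathcal{Q}_h^k(\nabla\cdot\sigma)=-\mathcal{Q}_h^k\bf$. Since $\nabla_w\cdot\delta\in[P_k(T)]^d$, the first term of $a$ therefore reproduces exactly the right-hand side $\sum_T(-\bf,\nabla_w\cdot\delta)_T$. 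Similarly,
\[
Q_0^k\sigma-2\mu\epsilon_w(Q_h^k\bu)-\lambda(\nabla_w\cdot Q_h^k\bu)I=\mathcal{Q}_h^k\big(\sigma-2\mu\epsilon(\bu)-\lambda(\nabla\cdot\bu)I\big)=0,
\]
so the second term of $a$ vanishes. Subtracting \eqref{WG} then gives the error equation
\[
a((e_{\bu_h},\bv),(e_{\sigma_h},\delta))+s_1(e_{\bu_h},\bv)+s_2(e_{\sigma_h},\delta)=s_1(Q_h^k\bu,\bv)+s_2(Q_h^k\sigma,\delta),
\]
valid for all $\bv\in W_h^0$ and $\delta\in V_h$. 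Note that $e_{\bu_h}\in W_h^0$, because $\bu_b=Q_b\bg=Q_b^k\bu$ on $\partial\Omega$. Hence I may choose $\bv=e_{\bu_h}$ and $\delta=e_{\sigma_h}$, and the left-hand side becomes exactly $\3bar(e_{\bu_h},e_{\sigma_h})\3bar^2$.

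Next I bound the consistency terms on the right. I apply Cauchy--Schwarz to each stabilizer:
\[
|s_1(Q_h^k\bu,e_{\bu_h})|\le\Big(\sum_T h_T^{-1}\|Q_0^k\bu-Q_b^k\bu\|_{\partial T}^2\Big)^{1/2}s_1(e_{\bu_h},e_{\bu_h})^{1/2}.
\]
On each face $e$ I use $Q_b^k\bu=Q_b^k(\bu)$ and $Q_b^kQ_0^k\bu=Q_0^k\bu$, because $Q_0^k\bu|_e\in P_k(e)$. This gives
\[
\|Q_0^k\bu-Q_b^k\bu\|_e=\|Q_b^k(Q_0^k\bu-\bu)\|_e\le\|Q_0^k\bu-\bu\|_e.
\]
The trace inequality \eqref{tracein} then yields
\[
h_T^{-1}\|Q_0^k\bu-\bu\|_{\partial T}^2\le C\big(h_T^{-2}\|\bu-Q_0^k\bu\|_T^2+\|\nabla(\bu-Q_0^k\bu)\|_T^2\big).
\]
Summing over $T$ and applying \eqref{3.1} with $m=k$ and $s=0,1$ gives the bound $Ch^{2k}\|\bu\|_{k+1}^2$. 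The identical argument with \eqref{3.2} bounds the $s_2$ term by $Ch^{k}\|\sigma\|_{k+1}\,s_2(e_{\sigma_h},e_{\sigma_h})^{1/2}$.

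Finally, since $s_i(\cdot,\cdot)^{1/2}\le\3bar(e_{\bu_h},e_{\sigma_h})\3bar$, I get
\[
\3bar(e_{\bu_h},e_{\sigma_h})\3bar^2\le C\big(h^k\|\bu\|_{k+1}+h^k\|\sigma\|_{k+1}\big)\3bar(e_{\bu_h},e_{\sigma_h})\3bar,
\]
and dividing through gives \eqref{est1}. The main point requiring care is the consistency step: the commutative properties must make the $a$-part of the residual vanish identically. This depends on the test spaces $\nabla_w\cdot\delta\in[P_k]^d$ and $\delta_0\in[P_k]^{d\times d}$ matching the projection degree $k$, so that the projections can be dropped inside the inner products. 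The regularity in the commutative lemma must also be sufficient, which is covered by $\bu,\sigma\in H^{k+1}$. The remaining work is standard projection estimates.
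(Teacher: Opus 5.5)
Your proof is correct and follows the paper's own proof. Both arguments obtain consistency from the commutative properties, reduce to the same error equation whose right-hand side is $s_1(Q_h^k\bu,\bv)+s_2(Q_h^k\sigma,\delta)$, test with the errors, and bound the stabilizer residuals using Cauchy--Schwarz, the trace inequality and the projection estimates. You also make explicit two steps the paper leaves implicit: that $e_{\bu_h}\in W_h^0$ because $\bu_b=Q_b\bg$ on $\partial\Omega$, and the contraction $\|Q_0^k\bu-Q_b^k\bu\|_e\le\|Q_0^k\bu-\bu\|_e$.
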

\begin{proof}
Testing the first equation in \eqref{model2} by $\nabla_w\cdot\delta$ and testing the second equation in \eqref{model2} by $\delta_0-2\mu \epsilon_w (\bv)-\lambda (\nabla_w\cdot\bv)I$ implies
\begin{equation*}
\begin{split}
\sum_{T\in \mathcal{T}_h}&(\nabla \cdot\sigma, \nabla_w\cdot\delta)_T+(\sigma-2\mu \epsilon (\bu)-\lambda (\nabla \cdot\bu)I, \delta_0-2\mu \epsilon_w (\bv)-\lambda (\nabla_w\cdot\bv)I)_T\\
&= \sum_{T\in \mathcal{T}_h}(-\mathbf{f}, \nabla_w\cdot\delta)_T.
\end{split}
\end{equation*}

Using \eqref{EQ:CommutativeP}-\eqref{EQ:CommutativeP3}, we have
\begin{equation}\label{s1}
\begin{split}
&\quad \ \sum_{T\in \mathcal{T}_h} (\nabla \cdot\sigma, \nabla_w\cdot\delta)_T+(\sigma-2\mu \epsilon (\bu)-\lambda (\nabla \cdot\bu)I, \\
 &\qquad \qquad \delta_0-2\mu \epsilon_w (\bv)-\lambda (\nabla_w\cdot\bv)I)_T\\ 
&= \sum_{T\in \mathcal{T}_h} (\mathcal{Q}_h^{k}\nabla \cdot\sigma, \nabla_w\cdot\delta)_T
   +(Q_0^{k}\sigma-\mathcal{Q}_h^{k } ( 2\mu \epsilon (\bu) \\
 &\qquad \qquad+\lambda (\nabla \cdot\bu)I), \delta_0-2\mu \epsilon_w (\bv)-\lambda (\nabla_w\cdot\bv)I)_T \\
&= \sum_{T\in \mathcal{T}_h} (\nabla_w \cdot (Q_h^{k}\sigma), \nabla_w\cdot\delta)_T
    +(Q_0^{k} \sigma-2\mu \epsilon_w (Q_h^k \bu)\\
 &\qquad \qquad -\lambda (\nabla_w \cdot (Q_h^k\bu))I, 
    \delta_0-2\mu \epsilon_w (\bv)-\lambda (\nabla_w\cdot\bv)I)_T \\
&= \sum_{T\in \mathcal{T}_h}(-\mathbf{f}, \nabla_w\cdot\delta)_T.
\end{split}
\end{equation}  

Subtracting \eqref{s1} from \eqref{WG} gives
\begin{equation*} 
\begin{split}
 \sum_{T\in \mathcal{T}_h}   ( \nabla_w \cdot (Q_h^{k}\sigma-\sigma_h), \nabla_w\cdot\delta)_T 
 + ( (Q_0^{k}\sigma-\sigma_0)  \\ -2\mu \epsilon_w (Q_h^k \bu-\bu_h)  
  -\lambda (\nabla_w \cdot (Q_h^k\bu-\bu_h))I,\  \delta_0-2\mu \epsilon_w (\bv)
      \\ -\lambda (\nabla_w\cdot\bv)I)_T    - s_1(\bu_h, \bv) - s_2(\sigma_h, \delta)  = 0.
\end{split}
\end{equation*}  

Substituting the error functions and rearranging yields:
\begin{equation}\label{ss}
\begin{split}
&\sum_{T\in \mathcal{T}_h}  ( \nabla_w \cdot (e_{\sigma_h}), \nabla_w\cdot\delta)_T + (e_{\sigma_0}-2\mu \epsilon_w (e_{\bu_h})-\lambda (\nabla_w \cdot e_{\bu_h})I, \\& \qquad\delta_0-2\mu \epsilon_w (\bv)-\lambda (\nabla_w\cdot\bv)I)_T    + s_1(e_{\bu_h}, \bv) + s_2(e_{\sigma_h}, \delta) \\
&  = s_1(Q_h^k\bu, \bv) + s_2(Q_h^{k}\sigma, \delta).
\end{split}
\end{equation} 

Letting $\bv=e_{\bu_h}$ and $\delta=e_{\sigma_h}$ in \eqref{ss} yields
\begin{equation}\label{sss}
\begin{split}
\3bar (e_{\bu_h}, e_{\sigma_h})\3bar^2 = s_1(Q_h^k\bu, e_{\bu_h}) + s_2(Q_h^{k}\sigma, e_{\sigma_h}).
\end{split}
\end{equation}  

Using the trace inequality \eqref{tracein} and the estimate \eqref{3.1} with $s=0, 1$ and $m=k$, we have
\begin{equation}\label{a1}
\begin{split} 
&\quad \ s_1(Q_h^k\bu, e_{\bu_h}) \\
&= \sum_{T\in \mathcal{T}_h} h_T^{-1}\langle Q_0^k\bu-Q_b^k\bu, e_{\bu_0}-e_{\bu_b}\rangle_{\partial T}\\
&\leq \left(\sum_{T\in \mathcal{T}_h}h_T^{-1}\|Q_0^k\bu-Q_b^k\bu\|^2_{\partial T}\right)^{\frac{1}{2}} \left(\sum_{T\in \mathcal{T}_h}h_T^{-1}\|e_{\bu_0}-e_{\bu_b}\|^2_{\partial T}\right)^{\frac{1}{2}}\\
&\leq C \left(\sum_{T\in \mathcal{T}_h}h_T^{-2}\|Q_0^k\bu- \bu\|^2_T 
   +  |Q_0^k\bu- \bu |^2_{1, T}\right)^{\frac{1}{2}}\3bar (e_{\bu_h}, e_{\sigma_h})\3bar\\
&\leq C h^k\|\bu\|_{k+1}\3bar (e_{\bu_h}, e_{\sigma_h})\3bar.
\end{split}
\end{equation}

Similarly, using the trace inequality \eqref{tracein} and the estimate \eqref{3.2} with $s=0, 1$ and $m=k$, we obtain:
\begin{equation}\label{a2}
\begin{split}
&\quad \ s_2(Q_h^{k}\sigma, e_{\sigma_h})\\
 &= \sum_{T\in \mathcal{T}_h} h_T^{-1}\langle Q_0^{k}\sigma-Q_b^{k}\sigma, e_{\sigma_0}-e_{\sigma_b}\rangle_{\partial T}\\
&\leq \left(\sum_{T\in \mathcal{T}_h}h_T^{-1}\|Q_0^{k}\sigma-Q_b^{k}\sigma\|^2_{\partial T}\right)^{\frac{1}{2}} \left(\sum_{T\in \mathcal{T}_h}h_T^{-1}\|e_{\sigma_0}-e_{\sigma_b}\|^2_{\partial T}\right)^{\frac{1}{2}}\\
&\leq C \left(\sum_{T\in \mathcal{T}_h}h_T^{-2}\|Q_0^{k}\sigma- \sigma\|^2_T + 
  |Q_0^{k}\sigma- \sigma |^2_{1, T}\right)^{\frac{1}{2}}\3bar (e_{\bu_h}, e_{\sigma_h})\3bar\\
&\leq C h^{k }\|\sigma\|_{k+1}\3bar (e_{\bu_h}, e_{\sigma_h})\3bar.
\end{split}
\end{equation}

Substituting \eqref{a1} and \eqref{a2} into \eqref{sss} completes the proof of the theorem. 
\end{proof}

\section{Numerical Tests}

In the numerical test,  we solve the linear elasticity equations \eqref{model2} 
   on the unit square domain $\Omega=(0,1)\times(0,1)$, where  
\an{\label{s2}
   \b u =\p{ \ 2^7 (2y^3 - 3y^2 + y) (x-x^2)^2 \\ -2^7 (2x^3 - 3x^2 + x) (y-y^2)^2}, \ \ \
     \mu=1, \ \ \ \lambda=1, \ 10^2 \ \t{or} \ 10^5, }  
and $\b g$, $\sigma$ and $\b f$ are defined by $\b u$.

\begin{figure}[H]
\begin{center}\setlength\unitlength{2.4pt}\centering 
 \begin{picture}(140,45)(0,0) \put(0,41){$G_1:$}  \put(50,41){$G_2:$} \put(100,41){$G_3:$} 
  
\def\sq{\begin{picture}(40,40)(0,0) 
  \multiput(0,0)(40,0){2}{\line(0,1){40}}\multiput(0,0)(0,40){2}{\line(1,0){40}} \end{picture} }
  
\put(0,0){\begin{picture}(40,40)(0,0)
  \multiput(0,0)(0,40){1}{\multiput(0,0)(40,0){1}{\sq}} 
  \end{picture} }
  
\put(50,0){\setlength\unitlength{1.2pt}\begin{picture}(40,40)(0,0)
  \multiput(0,0)(0,40){2}{\multiput(0,0)(40,0){2}{\sq}} 
  \end{picture} } 
\put(100,0){\setlength\unitlength{0.6pt}\begin{picture}(40,40)(0,0)
  \multiput(0,0)(0,40){4}{\multiput(0,0)(40,0){4}{\sq}} 
  \end{picture} } 
\end{picture}\end{center}
\caption{The uniform square grids used in Tables \ref{t1}--\ref{t4}. }
\label{f-1}
\end{figure}

 We apply the weak Galerkin $P_k^2$-$P_k^{2\times 2}$/$P_{k}^{2\times 2}$-$P_{k}$-$P_{k}^2$ 
    finite element methods with
      $k=1,2,3$ and $4$.
That is,  $\b v_0 \in P_k(T)^2$ and $\b v_b \in P_k(e)^2$ 
      in \eqref{Wk};
      $\sigma_0 \in P_k(T)^{2\times 2}$ and $\sigma_b \in P_k(e)^{2\times 2}$ 
        in \eqref{Vh};
      $\epsilon_w \b v_h \in P_{k}(T)^{2\times 2}$  in \eqref{w-sg};
      $\nabla_w \cdot \b v_h \in P_{k}(T)$  in \eqref{w-d-s};
       $\nabla_w \cdot \sigma_h \in P_{k}(T)^2$  in \eqref{w-d-u}.  
       
We test the computation on three types of grids, shown in Figures \ref{f-1}--\ref{f-3}.
The results are listed in Tables \ref{t1}-\ref{t4}.
  In these tables, $G_i$ denotes the $i$-th grid of each type,
  and 
\a{ \3bar E_h\3bar=\3bar (\b u-\b u_h, \sigma-\sigma_h) \3bar,
 } where $\3bar \cdot \3bar$ is defined in \eqref{three-b}.

\begin{figure}[H]
\begin{center}\setlength\unitlength{2.4pt}\centering 
 \begin{picture}(140,45)(0,0) \put(0,41){$G_1:$}  \put(50,41){$G_2:$} \put(100,41){$G_3:$} 
  
\def\sq{\begin{picture}(40,40)(0,0) \put(0,40){\line(1,-1){40}}
  \multiput(0,0)(40,0){2}{\line(0,1){40}}\multiput(0,0)(0,40){2}{\line(1,0){40}} \end{picture} }
  
\put(0,0){\begin{picture}(40,40)(0,0)
  \multiput(0,0)(0,40){1}{\multiput(0,0)(40,0){1}{\sq}} 
  \end{picture} }
  
\put(50,0){\setlength\unitlength{1.2pt}\begin{picture}(40,40)(0,0)
  \multiput(0,0)(0,40){2}{\multiput(0,0)(40,0){2}{\sq}} 
  \end{picture} } 
\put(100,0){\setlength\unitlength{0.6pt}\begin{picture}(40,40)(0,0)
  \multiput(0,0)(0,40){4}{\multiput(0,0)(40,0){4}{\sq}} 
  \end{picture} } 
\end{picture}\end{center}
\caption{The triangular  grids used in Tables \ref{t1}--\ref{t4}. }
\label{f-2}
\end{figure}

\begin{table}[H]
  \centering  \renewcommand{\arraystretch}{1.1}
  \caption{Error profile by the $P_1$ WG element  for computing \eqref{s2}. }
  \label{t1}
\begin{tabular}{c|cc|cc|cc}
\hline
 $G_i$ &   $\| u-u_h\|_{0}$ & $O(h^r)$ & $\3bar E_h\3bar  $& $O(h^r)$  
    & $\|\sigma-\sigma_h\|_0  $& $O(h^r)$ \\ \hline
    &  \multicolumn{6}{c}{On square meshes (Figure \ref{f-1}), $\lambda=1$}    \\
\hline  
 5&    0.542E-01 &  0.9&    0.330E+01 &  1.0&    0.519E+00 &  0.6 \\
 6&    0.235E-01 &  1.2&    0.164E+01 &  1.0&    0.287E+00 &  0.9 \\
 7&    0.840E-02 &  1.5&    0.808E+00 &  1.0&    0.135E+00 &  1.1 \\
\hline  &  \multicolumn{6}{c}{On square meshes (Figure \ref{f-1}), $\lambda=10^5$}    \\
\hline  
 5&    0.502E-01 &  1.2&    0.265E+03 &  1.9&    0.584E+00 &  0.5 \\
 6&    0.204E-01 &  1.3&    0.677E+02 &  2.0&    0.350E+00 &  0.7 \\
 7&    0.700E-02 &  1.5&    0.170E+02 &  2.0&    0.159E+00 &  1.1 \\
\hline 
    &  \multicolumn{6}{c}{On triangular meshes (Figure \ref{f-2}), $\lambda=1$}    \\  
\hline  
 5&    0.327E-01 &  1.3&    0.135E+01 &  1.0&    0.327E+00 &  1.0 \\
 6&    0.112E-01 &  1.5&    0.666E+00 &  1.0&    0.144E+00 &  1.2 \\
 7&    0.344E-02 &  1.7&    0.327E+00 &  1.0&    0.597E-01 &  1.3 \\
\hline 
 &  \multicolumn{6}{c}{On triangular meshes (Figure \ref{f-2}), $\lambda=10^5$}    \\  
\hline   
 4&    0.805E-01 &  1.2&    0.113E+05 &  0.6&    0.668E+00 &  0.7 \\
 5&    0.314E-01 &  1.4&    0.604E+04 &  0.9&    0.366E+00 &  0.9 \\
 6&    0.108E-01 &  1.5&    0.307E+04 &  1.0&    0.167E+00 &  1.1 \\
\hline 
    &  \multicolumn{6}{c}{ On  polygonal meshes (Figure \ref{f-3}),  $\lambda=1$}    \\  
\hline   
 4&    0.936E-01 &  1.0&    0.760E+01 &  0.8&    0.722E+00 &  0.5 \\
 5&    0.459E-01 &  1.0&    0.390E+01 &  1.0&    0.455E+00 &  0.7 \\
 6&    0.184E-01 &  1.3&    0.195E+01 &  1.0&    0.236E+00 &  0.9 \\
\hline 
    &  \multicolumn{6}{c}{ On  polygonal meshes (Figure \ref{f-3}),  $\lambda=10^5$}    \\  
\hline   
 4&    0.990E-01 &  1.3&    0.525E+04 &  0.9&    0.782E+00 &  0.6 \\
 5&    0.418E-01 &  1.2&    0.265E+04 &  1.0&    0.525E+00 &  0.6 \\
 6&    0.157E-01 &  1.4&    0.133E+04 &  1.0&    0.290E+00 &  0.9 \\
\hline 
    \end{tabular}%
\end{table}%

\begin{figure}[H]
\begin{center}\setlength\unitlength{2.4pt}\centering 
 \begin{picture}(140,45)(0,0) \put(0,41){$G_1:$}  \put(50,41){$G_2:$} \put(100,41){$G_3:$} 
  
\def\sq{\begin{picture}(40,40)(0,0) \put(0,0){\line(1,3){10}}  \put(40,40){\line(-1,-3){10}} \put(10,30){\line(1,-1){20}}
  \multiput(0,0)(40,0){2}{\line(0,1){40}}\multiput(0,0)(0,40){2}{\line(1,0){40}} \end{picture} }
  
\put(0,0){\begin{picture}(40,40)(0,0)
  \multiput(0,0)(0,40){1}{\multiput(0,0)(40,0){1}{\sq}} 
  \end{picture} }
  
\put(50,0){\setlength\unitlength{1.2pt}\begin{picture}(40,40)(0,0)
  \multiput(0,0)(0,40){2}{\multiput(0,0)(40,0){2}{\sq}} 
  \end{picture} } 

\put(100,0){\setlength\unitlength{0.6pt}\begin{picture}(40,40)(0,0)
  \multiput(0,0)(0,40){4}{\multiput(0,0)(40,0){4}{\sq}} 
  \end{picture} } 
\end{picture}\end{center}
\caption{The non-convex polygonal grids used in Tables \ref{t1}--\ref{t4}. }
\label{f-3}
\end{figure}

\begin{table}[H]
  \centering  \renewcommand{\arraystretch}{1.1}
  \caption{Error profile by the $P_2$ WG element  for computing \eqref{s2}. }
  \label{t2}
\begin{tabular}{c|cc|cc|cc}
\hline
 $G_i$ &   $\| u-u_h\|_{0}$ & $O(h^r)$ & $\3bar E_h\3bar  $& $O(h^r)$  
    & $\|\sigma-\sigma_h\|_0  $& $O(h^r)$ \\ \hline
    &  \multicolumn{6}{c}{On square meshes (Figure \ref{f-1}), $\lambda=1$}    \\
\hline  
 4&    0.103E-01 &  2.8&    0.211E+01 &  1.8&    0.123E+00 &  2.4 \\
 5&    0.129E-02 &  3.0&    0.542E+00 &  2.0&    0.238E-01 &  2.4 \\
 6&    0.154E-03 &  3.1&    0.136E+00 &  2.0&    0.492E-02 &  2.3 \\
\hline  &  \multicolumn{6}{c}{On square meshes (Figure \ref{f-1}), $\lambda=10^5$}    \\
\hline  
 4&    0.195E-02 &  3.1&    0.469E+00 &  1.9&    0.711E-01 &  2.0 \\
 5&    0.244E-03 &  3.0&    0.120E+00 &  2.0&    0.184E-01 &  2.0 \\
 6&    0.310E-04 &  3.0&    0.303E-01 &  2.0&    0.466E-02 &  2.0 \\
\hline 
    &  \multicolumn{6}{c}{On triangular meshes (Figure \ref{f-2}), $\lambda=1$}    \\  
\hline  
 4&    0.195E-02 &  3.1&    0.469E+00 &  1.9&    0.711E-01 &  2.0 \\
 5&    0.244E-03 &  3.0&    0.120E+00 &  2.0&    0.184E-01 &  2.0 \\
 6&    0.310E-04 &  3.0&    0.303E-01 &  2.0&    0.466E-02 &  2.0 \\
\hline 
 &  \multicolumn{6}{c}{On triangular meshes (Figure \ref{f-2}), $\lambda=10^5$}    \\  
\hline   
 3&    0.166E-01 &  2.9&    0.224E+05 &  1.6&    0.302E+00 &  1.7 \\
 4&    0.200E-02 &  3.0&    0.624E+04 &  1.8&    0.743E-01 &  2.0 \\
 5&    0.259E-03 &  3.0&    0.160E+04 &  2.0&    0.189E-01 &  2.0 \\
\hline 
    &  \multicolumn{6}{c}{ On  polygonal meshes (Figure \ref{f-3}),  $\lambda=1$}    \\  
\hline   
 3&    0.504E-01 &  2.8&    0.786E+01 &  1.1&    0.460E+00 &  1.8 \\
 4&    0.696E-02 &  2.9&    0.222E+01 &  1.8&    0.942E-01 &  2.3 \\
 5&    0.864E-03 &  3.0&    0.572E+00 &  2.0&    0.197E-01 &  2.3 \\
\hline 
    &  \multicolumn{6}{c}{ On  polygonal meshes (Figure \ref{f-3}),  $\lambda=10^5$}    \\  
\hline   
 3&    0.502E-01 &  2.9&    0.322E+05 &  1.9&    0.507E+00 &  1.6 \\
 4&    0.693E-02 &  2.9&    0.854E+04 &  1.9&    0.103E+00 &  2.3 \\
 5&    0.837E-03 &  3.1&    0.217E+04 &  2.0&    0.204E-01 &  2.3 \\
\hline 
    \end{tabular}%
\end{table}%

In Table \ref{t1}, when the \(P_{1}\) WG method is applied, 
 the error of the triple-bar norm converges at order one, as proved theoretically. 
When \(\lambda=10^5\), the large triple-bar norm is caused by the scaling of the norm. 
The \(L^{2}\) errors indicate that the method is locking-free. 
These results are consistently shown in Tables \ref{t2}--\ref{t4}. 
For the \(P_{3}\) and \(P_{4}\) methods, since the condition number is too large
  --- leading to a severe loss of significant digits
  --- we let \(\lambda=10^2\) instead of \(\lambda=10^5\).

\begin{table}[H]
  \centering  \renewcommand{\arraystretch}{1.1}
  \caption{Error profile by the $P_3$ WG element  for computing \eqref{s2}. }
  \label{t3}
\begin{tabular}{c|cc|cc|cc}
\hline
 $G_i$ &   $\| u-u_h\|_{0}$ & $O(h^r)$ & $\3bar E_h\3bar  $& $O(h^r)$  
    & $\|\sigma-\sigma_h\|_0  $& $O(h^r)$ \\ \hline
    &  \multicolumn{6}{c}{On square meshes (Figure \ref{f-1}), $\lambda=1$}    \\
\hline  
 4&    0.201E-02 &  3.9&    0.694E+00 &  2.9&    0.192E-01 &  3.7 \\
 5&    0.128E-03 &  4.0&    0.877E-01 &  3.0&    0.152E-02 &  3.7 \\
 6&    0.799E-05 &  4.0&    0.110E-01 &  3.0&    0.142E-03 &  3.4 \\
\hline  &  \multicolumn{6}{c}{On square meshes (Figure \ref{f-1}), $\lambda=10^2$}    \\
\hline  
 3&    0.295E-01 &  3.4&    0.659E+02 &  2.3&    0.263E+00 &  3.7 \\
 4&    0.202E-02 &  3.9&    0.903E+01 &  2.9&    0.204E-01 &  3.7 \\
 5&    0.128E-03 &  4.0&    0.115E+01 &  3.0&    0.174E-02 &  3.6 \\
\hline 
    &  \multicolumn{6}{c}{On triangular meshes (Figure \ref{f-2}), $\lambda=1$}    \\  
\hline  
 3&    0.239E-02 &  3.5&    0.484E+00 &  2.6&    0.501E-01 &  2.8 \\
 4&    0.165E-03 &  3.9&    0.641E-01 &  2.9&    0.563E-02 &  3.2 \\
 5&    0.106E-04 &  4.0&    0.812E-02 &  3.0&    0.624E-03 &  3.2 \\
\hline 
 &  \multicolumn{6}{c}{On triangular meshes (Figure \ref{f-2}), $\lambda=10^2$}    \\  
\hline   
 3&    0.241E-02 &  3.5&    0.820E+01 &  2.0&    0.555E-01 &  2.8 \\
 4&    0.168E-03 &  3.8&    0.117E+01 &  2.8&    0.657E-02 &  3.1 \\
 5&    0.107E-04 &  4.0&    0.150E+00 &  3.0&    0.746E-03 &  3.1 \\
\hline 
    &  \multicolumn{6}{c}{ On  polygonal meshes (Figure \ref{f-3}),  $\lambda=1$}    \\  
\hline   
 3&    0.160E-01 &  3.4&    0.423E+01 &  2.7&    0.136E+00 &  3.6 \\
 4&    0.110E-02 &  3.9&    0.557E+00 &  2.9&    0.112E-01 &  3.6 \\
 5&    0.699E-04 &  4.0&    0.704E-01 &  3.0&    0.100E-02 &  3.5 \\ 
\hline 
    &  \multicolumn{6}{c}{ On  polygonal meshes (Figure \ref{f-3}),  $\lambda=10^2$}    \\  
\hline   
 3&    0.160E-01 &  3.4&    0.354E+02 &  2.4&    0.141E+00 &  3.6 \\
 4&    0.110E-02 &  3.9&    0.484E+01 &  2.9&    0.124E-01 &  3.5 \\
 5&    0.698E-04 &  4.0&    0.618E+00 &  3.0&    0.115E-02 &  3.4 \\
\hline 
    \end{tabular}%
\end{table}%

\begin{table}[H]
  \centering  \renewcommand{\arraystretch}{1.1}
  \caption{Error profile by the $P_4$ WG element  for computing \eqref{s2}. }
  \label{t4}
\begin{tabular}{c|cc|cc|cc}
\hline
 $G_i$ &   $\| u-u_h\|_{0}$ & $O(h^r)$ & $\3bar E_h\3bar  $& $O(h^r)$  
    & $\|\sigma-\sigma_h\|_0  $& $O(h^r)$ \\ \hline
    &  \multicolumn{6}{c}{On square meshes (Figure \ref{f-1}), $\lambda=1$}    \\
\hline  
 2&    0.316E+00 &  1.8&    0.416E+02 &  3.4&    0.169E+01 &  4.7 \\
 3&    0.112E-01 &  4.8&    0.279E+01 &  3.9&    0.555E-01 &  4.9 \\
 4&    0.361E-03 &  5.0&    0.178E+00 &  4.0&    0.186E-02 &  4.9 \\
\hline  &  \multicolumn{6}{c}{On square meshes (Figure \ref{f-1}), $\lambda=10^2$}    \\
\hline  
 2&    0.312E+00 &  1.9&    0.638E+03 &  0.0&    0.173E+01 &  4.6 \\
 3&    0.112E-01 &  4.8&    0.458E+02 &  3.8&    0.586E-01 &  4.9 \\
 4&    0.364E-03 &  4.9&    0.295E+01 &  4.0&    0.212E-02 &  4.8 \\
\hline 
    &  \multicolumn{6}{c}{On triangular meshes (Figure \ref{f-2}), $\lambda=1$}    \\  
\hline  
 3&    0.387E-03 &  4.6&    0.855E-01 &  3.8&    0.719E-02 &  3.9 \\
 4&    0.130E-04 &  4.9&    0.551E-02 &  4.0&    0.482E-03 &  3.9 \\
 5&    0.412E-06 &  5.0&    0.347E-03 &  4.0&    0.314E-04 &  3.9 \\
\hline 
 &  \multicolumn{6}{c}{On triangular meshes (Figure \ref{f-2}), $\lambda=10^2$}    \\  
\hline   
 3&    0.393E-03 &  4.6&    0.252E+01 &  3.6&    0.794E-02 &  3.9 \\
 4&    0.132E-04 &  4.9&    0.167E+00 &  3.9&    0.521E-03 &  3.9 \\
 5&    0.420E-06 &  5.0&    0.106E-01 &  4.0&    0.337E-04 &  4.0 \\
\hline 
    &  \multicolumn{6}{c}{ On  polygonal meshes (Figure \ref{f-3}),  $\lambda=1$}    \\  
\hline   
 2&    0.158E+00 &  2.7&    0.226E+02 &  3.3&    0.841E+00 &  4.5 \\
 3&    0.586E-02 &  4.8&    0.153E+01 &  3.9&    0.283E-01 &  4.9 \\
 4&    0.190E-03 &  4.9&    0.975E-01 &  4.0&    0.919E-03 &  4.9 \\
\hline 
    &  \multicolumn{6}{c}{ On  polygonal meshes (Figure \ref{f-3}),  $\lambda=10^2$}    \\  
\hline   
 2&    0.158E+00 &  2.7&    0.476E+03 &  1.6&    0.844E+00 &  4.5 \\
 3&    0.586E-02 &  4.8&    0.348E+02 &  3.8&    0.289E-01 &  4.9 \\
 4&    0.191E-03 &  4.9&    0.226E+01 &  3.9&    0.138E-02 &  4.4 \\
\hline 
    \end{tabular}%
\end{table}%


\begin{thebibliography}{99}

\bibitem{4}{\sc Arnold, D.N.,Awanou, G.,Winther, R},  {\em Nonconforming tetrahedral mixed
finite elements for elasticity}. Mathematical Models and Methods in Applied
Sciences 24(04), 783–796 (2014).

\bibitem{5}{\sc Arnold, D.N., Brezzi, F., Cockburn, B., Marini, L.D},  {\em Unified analysis of
discontinuous Galerkin methods for elliptic problems}. SIAM journal on
numerical analysis 39(5), 1749–1779 (2002).

\bibitem{9}{\sc  Arnold, D.N., Winther, R},  {\em Nonconforming mixed elements for elasticity.
Mathematical models and methods in applied sciences}, 13(03), 295–307(2003).

\bibitem{12}{\sc  Awanou, G},  {\em A rotated nonconforming rectangular mixed element for elasticity}. Calcolo 46(1), 49–60 (2009).

\bibitem{13}{\sc  Boffi, D., Brezzi, F., Fortin, M},  {\em Reduced symmetry elements in linear
elasticity}. Communications on Pure \& Applied Analysis 8(1), 95–121
(2009). 

   \bibitem{wg11}{\sc  S. Cao, C. Wang and J. Wang},  {\em A new numerical method for div-curl Systems with Low Regularity Assumptions}, Computers and Mathematics with Applications, vol. 144, pp. 47-59, 2022. 


 

   \bibitem{19}{\sc  Chen, Y., Huang, J., Huang, X., Xu, Y},  {\em On the local discontinuous Galerkin
method for linear elasticity}. Mathematical Problems in Engineering (2010).



 \bibitem{guan} {\sc Q. Guan, M. Gunzburger, W. Zhao},  {\em Weak-Galerkin Finite Element Methods for a Second-Order Elliptic Variational Inequality},
Computer Methods in Applied Mechanics and Engineering 337, 677-688.

 \bibitem{guan2} {\sc Q. Guan, W. Zhao},  {\em Error Analysis of Energy Stable Weak Galerkin Schemes for the Allen-Cahn equation}, 
 Advances in Computational Science \& Engineering (ACSE) 7, 1. 
 
\bibitem{28}{\sc Gong, S., Wu, S., Xu, J.},  {\em  New hybridized mixed methods for linear elasticity
and optimal multilevel solvers}. Numerische Mathematik 141(2),
569–604 (2019).

\bibitem{29}{\sc  Gopalakrishnan, J., Guzman, J},  {\em Symmetric nonconforming mixed finite
elements for linear elasticity}. SIAM Journal on Numerical Analysis 49(4),
1504–1520 (2011).

\bibitem{31}{\sc  Hong, Q., Hu, J., Shu, S., Xu, J},  {\em A discontinuous Galerkin method for the
fourth-order curl problem}. Journal of Computational Mathematics 30(6),
565–578 (2012).
 
\bibitem{41}{\sc Hu, J},  {\em Finite element approximations of symmetric tensors on simplicial
grids inRn: The higher order case}. Journal of Computational Mathematics
33(3), 283–296 (2015)


 

 \bibitem{wg14}{\sc D. Li, Y. Nie, and C. Wang},  {\em Superconvergence of Numerical Gradient for Weak Galerkin Finite Element Methods on Nonuniform Cartesian Partitions in Three Dimensions}, Computers and Mathematics with Applications, vol 78(3), pp. 905-928, 2019.
 
  \bibitem{wg1} {\sc D. Li, C. Wang and J. Wang},  {\em An Extension of the Morley Element on General Polytopal Partitions Using Weak Galerkin Methods}, Journal of Scientific Computing, 100, vol 27, 2024.  
  
 \bibitem{wg2} {\sc D. Li, C. Wang and S. Zhang},  {\em Weak Galerkin methods for elliptic interface problems on curved polygonal partitions}, Journal of Computational and Applied Mathematics, pp. 115995, 2024. 
 
\bibitem{wg5} {\sc D. Li, C. Wang, J.  Wang and X. Ye},  {\em Generalized weak Galerkin finite element methods for second order elliptic problems}, Journal of Computational and Applied Mathematics, vol. 445, pp. 115833, 2024.

 \bibitem{wg6} {\sc D. Li, C. Wang, J. Wang and S. Zhang},  {\em High Order Morley Elements for Biharmonic Equations on Polytopal Partitions}, Journal of Computational and Applied Mathematics, Vol. 443, pp. 115757, 2024.
 
 \bibitem{wg7} {\sc D. Li, C. Wang and J. Wang},  {\em Curved Elements in Weak Galerkin Finite Element Methods}, Computers and Mathematics with Applications, Vol. 153, pp. 20-32, 2024.
 
\bibitem{wg8} {\sc D. Li, C. Wang and J. Wang},  {\em Generalized Weak Galerkin Finite Element Methods for Biharmonic Equations}, Journal of Computational and Applied Mathematics, vol. 434, 115353, 2023.
  
  \bibitem{wg13}{\sc  D. Li, C. Wang, and J. Wang},  {\em Superconvergence of the Gradient Approximation for Weak Galerkin Finite Element Methods on Rectangular Partitions}, Applied Numerical Mathematics, vol. 150, pp. 396-417, 2020.
%

\bibitem{48}{\sc  Pechstein, A., Schoberl, J},  {\em Tangential-displacement and normal–normalstress
continuous mixed finite elements for elasticity}. Mathematical Models
and Methods in Applied Sciences 21(08), 1761–1782 (2011)

\bibitem{49}{\sc Pechstein, A.S., Sch oberl, J},  {\em An analysis of the TDNNS method using
natural norms}. Numerische Mathematik 139(1), 93–120 (2018)

\bibitem{57}{\sc  Fraeijs de Veubeke, B},  {\em Stress function approach. Proc. of the World
Congress on Finite Element Methods in Structural Mechanics}. Vol. 1,
Bournemouth, Dorset, England pp. J.1–J.51 (1975)
%
%
%
   \bibitem{wg15}{\sc C. Wang},  {\em New Discretization Schemes for Time-Harmonic Maxwell Equations by Weak Galerkin Finite Element Methods}, Journal of Computational and Applied Mathematics, Vol. 341, pp. 127-143, 2018.  
    
 \bibitem{wg17}{\sc C. Wang and J. Wang},  {\em Discretization of Div-Curl Systems by Weak Galerkin Finite Element Methods on Polyhedral Partitions}, Journal of Scientific Computing, Vol. 68, pp. 1144-1171, 2016. 
 
   \bibitem{wg19}{\sc C. Wang and J. Wang},  {\em A Hybridized Formulation for Weak Galerkin Finite Element Methods for Biharmonic Equation on Polygonal or Polyhedral Meshes}, International Journal of Numerical Analysis and Modeling, Vol. 12, pp. 302-317, 2015. 
   
 \bibitem{wg20}{\sc  J. Wang and C. Wang},  {\em Weak Galerkin Finite Element Methods for Elliptic PDEs}, Science China, Vol. 45, pp. 1061-1092, 2015.  
 
 \bibitem{wg21}{\sc C. Wang and J. Wang},  {\em An Efficient Numerical Scheme for the Biharmonic Equation by Weak Galerkin Finite Element Methods on Polygonal or Polyhedral Meshes}, Journal of Computers and Mathematics with Applications, Vol. 68, 12, pp. 2314-2330, 2014.  
 
   \bibitem{wg18}{\sc C. Wang, J. Wang, R. Wang and R. Zhang},  {\em A Locking-Free Weak Galerkin Finite Element Method for Elasticity Problems in the Primal Formulation}, Journal of Computational and Applied Mathematics, Vol. 307, pp. 346-366, 2016.   
 

 \bibitem{wg12}{\sc  C. Wang, J. Wang, X. Ye and S. Zhang},  {\em De Rham Complexes for Weak Galerkin Finite Element Spaces}, Journal of Computational and Applied Mathematics, vol. 397, pp. 113645, 2021.
 
 \bibitem{wg3} {\sc C. Wang, J. Wang and S. Zhang},  {\em Weak Galerkin Finite Element Methods for Optimal Control Problems Governed by Second Order Elliptic Partial Differential Equations}, Journal of Computational and Applied Mathematics, in press, 2024. 
   
 \bibitem{wg9} {\sc C. Wang, J. Wang and S. Zhang},  {\em Weak Galerkin Finite Element Methods for Quad-Curl Problems}, Journal of Computational and Applied Mathematics, vol. 428, pp. 115186, 2023.
 
   \bibitem{wy3655} {\sc J. Wang, and X. Ye}, {\em A weak Galerkin mixed finite element method for second-order elliptic problems}, Math. Comp., vol. 83, pp. 2101-2126, 2014.
   
\bibitem{59}{\sc Wang, F., Wu, S., Xu, J},  {\em A mixed discontinuous Galerkin method for
linear elasticity with strongly imposed symmetry}. Journal of Scientific
Computing 83(1), 1–17 (2020).

\bibitem{62}{\sc  Wu, S., Gong, S., Xu, J},  {\em Interior penalty mixed finite element methods of
any order in any dimension for linear elasticity with strongly symmetric
stress tensor}. Mathematical Models and Methods in Applied Sciences
27(14), 2711–2743 (2017)


  \bibitem{wg4} {\sc C. Wang, X. Ye and S. Zhang},  {\em A Modified weak Galerkin finite element method for the Maxwell equations on polyhedral meshes}, Journal of Computational and Applied Mathematics, vol. 448, pp. 115918, 2024. 

  \bibitem{wgelas}{\sc  C. Wang and S. Zhang},  {\em Auto-stabilized weak Galerkin methods for elasticity problems},  Journal of Computational and Applied Mathematics, vol. 477, 117199, 2026.
  
 \bibitem{wg10}{\sc  C. Wang and S. Zhang},  {\em A Weak Galerkin Method for Elasticity Interface Problems}, Journal of Computational and Applied Mathematics, vol. 419, 114726, 2023. 
   
 \bibitem{wg16}{\sc  C. Wang and H. Zhou},  {\em A Weak Galerkin Finite Element Method for a Type of Fourth Order Problem arising from Fluorescence Tomography}, Journal of Scientific Computing, Vol. 71(3), pp. 897-918, 2017.  

 \bibitem{ye}{\sc  Y. Xiu and S. Zhang},  {\em  A stabilizer-free weak Galerkin finite element method on
polytopal meshes}, Journal of  Computational and Applied Mathematics, vol 371, 112699, 2020.

\bibitem{63}{\sc Yi, S.Y},  {\em Nonconforming mixed finite element methods for linear elasticity
using rectangular elements in two and three dimensions}. Calcolo 42(2),
115–133 (2005).


\bibitem{64}{\sc  Yi, S.Y},  {\em A new nonconforming mixed finite element method for linear
Elasticity}. Mathematical Models and Methods in Applied Sciences 16(07),
979–999 (2006).

\end{thebibliography}
\end{document}